\title{Sharp regularity for general Poisson equations \\ with borderline sources}
\author{Eduardo V. Teixeira}
\newlength{\hchng}
\newlength{\vchng}
\def \dist {\mathrm{dist}}
\def \Leb {\mathscr{L}^n}
\newtheorem{theorem}{Theorem}[section]
\newtheorem{lemma}[theorem]{Lemma}
\theoremstyle{definition}
\theoremstyle{remark}
\newtheorem{remark}[theorem]{Remark}
\numberwithin{equation}{section}
\newcommand{\intav}[1]{\mathchoice {\mathop{\vrule width 6pt height 3 pt depth  -2.5pt
\kern -8pt \intop}\nolimits_{\kern -6pt#1}} {\mathop{\vrule width
5pt height 3  pt depth -2.6pt \kern -6pt \intop}\nolimits_{#1}}
{\mathop{\vrule width 5pt height 3 pt depth -2.6pt \kern -6pt
\intop}\nolimits_{#1}} {\mathop{\vrule width 5pt height 3 pt depth
-2.6pt \kern -6pt \intop}\nolimits_{#1}}}
\begin{document}

\maketitle

\begin{abstract}
This article concerns optimal estimates for non-homogeneous
degenerate elliptic equation with source functions in borderline
spaces of integrability. We deliver sharp H\"older continuity
estimates for solutions to $p$-degenerate elliptic equations in
rough media with sources in the weak Lebesgue space
$L_\text{weak}^{\frac{n}{p} + \epsilon}$.  For the borderline
case, $f \in L_\text{weak}^{\frac{n}{p}}$, solutions may not be
bounded; nevertheless we show that solutions have bounded mean
oscillation, in particular  John-Nirenberg's exponential
integrability estimates can be employed. All the results presented
in this paper are optimal. Our approach is inspired by a powerful
Caffarelli-type compactness method and it can be employed in a
number of other situations.
\medskip

\noindent \textit{MSC:} 35B65, 35J70.

\noindent \textbf{Keywords:} Regularity theory, borderline integrability, degenerate elliptic equations.

\end{abstract}

\tableofcontents

\section{Introduction}

Central theme in the theory of elliptic partial differential equations, the classical Poisson equation
\begin{equation}\label{Int - PE}
    -\Delta u = f(X),
\end{equation}
models important problems from theoretical physics, mechanical
engineering to biology, economics, among many other
applications. One of the key objectives in the analysis of Poisson
equations is to assure regularity of $u$ based on smoothness
or integrability properties of its laplacian, $f$. In this context, Schauder
estimates is a fundamental result. It assures that the Hessian of
$u$, $D^2u$, is as regular as $f$, provided $f$ has an appropriate
modulus of continuity. More precisely, if $f \in C^\alpha(B_1)$,
$0 < \alpha < 1$ then $u \in C^{2, \alpha}(B_{1/2})$ and
\begin{equation}\label{Int - Schauder Est}
    \|u\|_{C^{2, \alpha}(B_{1/2})} \le C_n \left \{ \|f\|_{C^{\alpha}(B_{1})} + \|u\|_{L^\infty(B_1)} \right \},
\end{equation}
for a dimensional constant $C_n$. Schauder estimate is sharp in several ways. Clearly if $u \in C^{2,\alpha}$, then its laplacian is $\alpha$-H\"older continuous. Also if $f$ is merely continuous, one cannot assure $u \in C^2$, nor even $C_{\text{loc}}^{1,1}$ bounds are available. Schauder estimates also fail in the upper extreme, $\alpha = 1$, i.e., if $f \in \text{Lip}$, it is \textit{not true} in general that $u \in C^{2,1}_\text{loc}$.

\par

Establishing regularity of solutions to \eqref{Int - PE} reduces to understanding the behavior of the Newtonian potential of $f$,
 \begin{equation}\label{Int - Newton Pot}
    N_f(X) := \int \dfrac{1}{|X-Y|^{n-2}} f(Y) dY.
\end{equation}
The kernel that appears in \eqref{Int - Newton Pot}, $\Gamma(X) = |X-Y|^{2-n}$, is the fundamental solution of the laplacian. The second derivative of $\Gamma$,  $D_{ij} \Gamma \sim |X-Y|^{-n}$ is  not integrable, but it is almost integrable, in the sense that $|X-Y|^\epsilon D_{ij} \Gamma $ is integrable for any $0 < \epsilon$. This is the key observation that explains why Schauder estimates hold when $f \in C^\alpha$, $0 < \alpha <1$, and it fails when $f$ is merely bounded or  continuous.

\par

In several applications, the source function $f$ is not
continuous, but only $q$-integrable, i.e., $f  \in L^q(B_1)$, for
some $1< q < \infty$. In this case, the corresponding regularity
theory, due to Calder\'on and Zygmund,  asserts that $u \in
W^{2,q}(B_{1/2})$ and

\begin{equation}\label{Int - CZ Est}
    \|u\|_{W^{2, q}(B_{1/2})} \le C_n \left \{ \|f\|_{L^{q}(B_{1})} + \|u\|_{L^q(B_1)} \right \},
\end{equation}

In particular, if $f \in L^\infty$, then $u \in W^{2,q}$ for all $q < \infty$ and by Sobolev embedding, $u \in C^{1,\alpha}$ for any $\alpha < 1$. This type of thesis is usually called \textit{almost optimal regularity result}. Heuristically, for borderline hypotheses, \textit{almost optimal regularity result} is the best one should hope for.

\par

Regularity theory for problems in rough heterogeneous media,  i.e., when governed by elliptic equations with measurable coefficients, is rather more sophisticated, and even for the homogeneous equation
$$
    \nabla \cdot \left ( a_{ij}(X) Du \right ) = 0,
$$
solutions are,  in general, known to be only H\"older continuous. This is the content of De Giorgi, Moser and Nash regularity
theory.  Calder\'on-Zygmung regularity estimates are not available in this setting. In even more complex models, the laplacian in \eqref{Int - PE} is
replaced by further involved nonlinear elliptic operators,
\begin{equation}\label{Eq}
    -\nabla \cdot a(X,Du) = f(X),
\end{equation}
where $a \colon B_1 \times \mathbb{R}^n \to \mathbb{R}^n$ is
$p$-degenerate elliptic vector field.  Throughout this paper we
shall always assume the following standard structural assumption
on the vector field $a$:
\begin{equation}\label{Hyp a}
    \left \{
        \begin{array}{rll}
            |a(X,\xi)| + |\partial_\xi a(X, \xi) |    |\xi| &\le&  \Lambda |\xi|^{p-1}  \\
            \lambda  |\xi_1|^{p-2} |\xi_2|^2 &\le & \langle \partial_\xi a(X, \xi_1)\xi_2, \xi_2 \rangle,
        \end{array}
    \right.
\end{equation}
for positive constants $0 < \lambda \le \Lambda < +\infty$.  As
usual in the literature, we could also include a parameter $s \ge
0$ as to distinguish the model $p$-Laplacian operator ($s=0$) from
the nondegenerate one ($s>0$), see for instance \cite{M01, M02}.
Throughout this paper, constants that depend only upon $n$, $p$,
$\lambda$ and $\Lambda$ will be called \textit{universal}.  


We recall that Equation \eqref{Eq} appears for instance as the
Euler-Lagrange equation of the minimization problem
 $$
    \int F(X, \nabla u) + f(X)u dX \to \min,
 $$
where the variational kernel $F(X, \xi)$ is convex in $\xi$, $F(X,\xi) \sim |\xi|^p$ and $F(X, \lambda \xi) = |\lambda|^p F(X, \xi)$. A typical operator to keep in mind is the $p$-laplace in rough media,
 $$
    -\nabla \cdot (a_{ij}(X) |\nabla u|^{p-2} \nabla u ),
 $$
 where $a_{ij}$ is a bounded, positive definite matrix.

\par

The regularity theory for Equation \eqref{Eq} is nowadays fairly  well established; however it is considerably more subtle than the corresponding linear, uniform elliptic theory. For instance, it is well known that $p$-harmonic functions are locally $C^{1,\alpha}$ for some $\alpha$ that depends on dimension and $p$. The precise value of optimal $\alpha$ is, in general, unknown.

\par

The main goal of the present article is to determine optimal and \textit{almost optimal} regularity estimates for solutions to Equation \eqref{Eq}, based on integrability properties (or more generally on the behavior of the distributional function) of the source $f$. The regularity estimates presented in this paper do not depend much on the concept of weak solution used. Indeed, they can be understood as \textit{a priori} estimates that do not depend on any further regularity property of $f$ or $u$. In the proofs, though, we shall always work with distributional solutions. However the same arguments go through, with no change, if one chooses to use the notion of entropy solutions, see \cite{BBGGPV} or any appropriate approximation scheme. Also we mention that, per our primary motivations, throughout the whole paper we shall only consider the range
\begin{equation}\label{range n, p}
    1 < p < n,
\end{equation}
where $n$ is the dimension and $p$ is the degeneracy exponent of the vector field $a(X, Du)$.

\par

For $L^\infty$-bounds of solutions to Equation \eqref{Eq},  the borderline integrability condition on the source function $f$ is $L^{\frac{n}{p}}$. More precisely, if $f \in L^{ \frac{n}{p}+ \epsilon}$, for any tiny $\epsilon > 0$, solutions are bounded; however one cannot bound the $L^\infty$-norm of $u$ by the $L^{\frac{n}{p}}$ norm of $f$.  The first result we show in this paper, Theorem \ref{main}, is an optimal BMO estimate of solutions with source functions in the weak Lebesgue space $L_\text{weak}^{\frac{n}{p}}$. Under slightly different structure assumptions, a similar result has been obtained by G. Mingione,  Theorem 1.12 in \cite{M01}, as a consequence of potential
analysis considerations (see also \cite{XZ}). Our proof is neither based on potential analysis nor on singular integral considerations. Instead, it is inspired by a powerful compactness type of argument, see \cite{C1}, \cite{CP}, and also \cite{AL01, AL02}. The
case $p=n$, i.e., for the $n$-Laplacian equation,
with $f$ being a finite measure relates to the article \cite{DHM}.
These results could be delivered by our methods as well. We emphasize that in the
case $p=n$, $L^1_\text{weak}$ functions may not define a finite measure. Nevertheless, Theorem \ref{main} provides \textit{a priori} estimates for \textit{a priori} regular solutions. When $f$ is also a measure then this implies an existence and regularity theorem together with known approximation machineries.
\par

As soon as the source function $f$ becomes $(\frac{n}{p} + \epsilon)$-integrable, we show that solutions are in fact continuous. Not only do we show continuity of solution, but actually we provide the precise sharp H\"older exponent of continuity of $u$ based only on the integrability of $f$ and the regularity theory available for $a$-harmonic functions. Once more, the proof of such a result is based on compactness method and explores only the behavior of the distributional function of the source $f$, that is, $f$ needs only to belong to the weak Lebesgue space  $L_\text{weak}^{\theta \cdot \frac{n}{p}}$, $ 1 < \theta < p$. In this case, we show, see Theorem \ref{main 2} and Theorem \ref{main 3}, that
$$
      u \in C^{\min \{ \frac{p}{p-1} \cdot \frac{\theta-1}{\theta}, \alpha_0^{-} \}}_\text{loc},
$$
where $\alpha_0$ is the universal optimal H\"older exponent for solutions to $ -\nabla \cdot a (X, Du) = 0$. Furthermore, we obtain the appropriate \textit{a priori} universal estimate. Such a result brings
important novelties. The first one is the optimal
regularity space $u$ lies. In many applications, for instance in free
boundary and geometric problems, it is important to determine
accurately how fast the solution grow away from its zero level
set. In such a setting, knowing the precise regularity estimate is
crucial for the program.  Example of such problems are equations with singular terms, $-\Delta_p u \sim u^{-\gamma}$, $\gamma > 1$. For these free boundary
geometric problems, solutions are expected to behave like
$|X|^\beta$, near a free boundary point. Thus it is important to
establish regularity estimates where potentials are assumed to
belong to $L_\text{weak}^{\frac{n}{\beta \gamma}}$, but not in the
classical Lebesgue space $L^{\frac{n}{\beta \gamma}}$.
Another important advantage of our approach concerns its
flexibility, which allows further
generalizations, for instance to equations with measure data, to
systems, or even to $p$-degenerate equations in nondivergence
form, $ \mathcal{F}(X,u, \nabla u, D^2u) = f$, where
$\mathcal{F}(X,\xi s, \xi \mathrm{p}, \xi M) \sim
\xi^{p-1}\mathcal{F}(X, s,  \mathrm{p}, M)$, for $\xi > 0$. For
this class of problems, compactness is consequence of Harnack type
inequality as in the original approach in \cite{C1}. When projected to the constant coefficient case, the optimal $C^\alpha$ estimate established in this paper is in accordance to the gradient estimates obtained in \cite{M02, DM} through a powerful and sophisticated nonlinear potential theory. Indeed, for the model equation $-\Delta_p u = f \in L_\text{weak}^{\theta\frac{n}{p}}$, it follows from \cite{M02}, Theorem 1 and \cite{DM}, Theorem 1.1  that $\nabla u \in  L_\text{weak}^{\frac{\theta n (p-1)}{p - \theta}}$, thus by Morrey embedding Theorem, $u \in C^{\frac{p}{p-1} \cdot \frac{\theta-1}{\theta}}$.

\par

\medskip

The paper is organized as follows. In Section \ref{Section
Compactness} we prove a basic compactness Lemma which assures that
if $\|f\|_{L^{\frac{n}{p}}_\mathrm{weak}}$ is small, then there
exists an $\alpha_0$-H\"older continuous function close to $u$ in
$L^p(B_{1/2})$. Section \ref{Section proof main} is devoted to the
proof BMO estimates. In Section \ref{Calpha theory} we address the optimal $C^\alpha$ regularity theory.

\bigskip

\noindent{\bf Acknowledgement.} The author  would like to express
his gratitude to Giuseppe Mingione, for several insightful
comments that benefited a lot the final presentation of this
article.  The author also thanks the anonymous referee for such a careful revision. This work is partially supported by CNPq-Brazil.

\section{Compactness of solutions} \label{Section Compactness}

In this section, we establish  a  compactness result for solutions
to non-homogeneous Poisson Equations \eqref{Eq} that will play a
fundamental role in the proof of Theorem \ref{main} and Theorem
\ref{main 2}. In fact Lemma \ref{compactness} follows as a
consequence of Lemma 3.2 in \cite{DM01}. We include here a proof
for completeness purposes.
\begin{lemma}\label{compactness} Let $u \in W^{1,p}(B_1)$ be a weak solution to \eqref{Eq}, with $\intav{B_1} |u|^p dX \le 1$. Given $\delta > 0$, there exists a $0 < \varepsilon \ll 1$, depending only on $p$, $n$, $\lambda$, $\Lambda$, $\nu$ and $\delta$,  such that if
\begin{equation}\label{Hyp Lemma Compactness}
    \|f\|_{L_\mathrm{weak}^{\frac{n}{p}}(B_1)}  \le \varepsilon,
\end{equation}
then there exists a function $h$ in $B_{1/2}$ satisfying
\begin{equation}\label{Hyp Lemma Compactness Homogeneous}
    -\nabla \cdot \overline{a} (X, \nabla h) = 0, \quad \text{ in } B_{1/2},
\end{equation}
for some vector field $\overline{a}$ satisfying \eqref{Hyp a} with the same ellipticity constants $\lambda$ and $\Lambda$
such that
$$
    \intav{B_{1/2}} |u(X) - h(X)|^p dX  < \delta^p.
$$
\end{lemma}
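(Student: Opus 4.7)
I would argue by contradiction, in the stability/compactness spirit of Caffarelli used throughout the paper. Assume the conclusion fails: then there exist $\delta_0>0$, vector fields $a_k$ all satisfying \eqref{Hyp a} with the same $\lambda,\Lambda$, sources $f_k$ with $\|f_k\|_{L^{n/p}_\mathrm{weak}(B_1)}\to 0$, and weak solutions $u_k$ of $-\nabla\cdot a_k(X,\nabla u_k)=f_k$ with $\intav{B_1}|u_k|^p\,dX\le 1$, yet
$$\intav{B_{1/2}} |u_k-h|^p\,dX\;\ge\;\delta_0^p$$
for \emph{every} vector field $\overline a$ obeying \eqref{Hyp a} and every $\overline a$-harmonic $h$ on $B_{1/2}$.

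First I would establish a uniform Caccioppoli-type bound. Testing the equation with $u_k\eta^p$ for a cutoff $\eta\equiv 1$ on $B_{3/4}$ supported in $B_1$, the structural hypothesis \eqref{Hyp a} and Young's inequality give
$$\int |\nabla u_k|^p\eta^p\,dX\;\le\; C\int|u_k|^p\,dX+C\left|\int f_k u_k\eta^p\,dX\right|.$$
The source term is controlled by H\"older duality in Lorentz spaces pairing $L^{n/p,\infty}$ with $L^{n/(n-p),1}$, combined with the refined Sobolev embedding $W^{1,p}\hookrightarrow L^{p^*,p}$ ($p^*=np/(n-p)$); since $\|f_k\|\to 0$, that contribution can be absorbed into the left-hand side, yielding $\|u_k\|_{W^{1,p}(B_{3/4})}\le C$ uniformly. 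Passing to subsequences, Rellich--Kondrachov furnishes $u_k\rightharpoonup u_\infty$ weakly in $W^{1,p}(B_{3/4})$ and strongly in $L^p(B_{3/4})$; an Arzel\`a--Ascoli argument, exploiting the uniform bounds on $a_k$ and $\partial_\xi a_k$ in \eqref{Hyp a}, extracts a further subsequence with $a_k\to\overline a$ locally uniformly in $\xi$ for a.e.\ $X$, where $\overline a$ still satisfies \eqref{Hyp a} with the same constants.

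The crux of the proof is to verify that $u_\infty$ is a weak solution of $-\nabla\cdot\overline a(X,\nabla u_\infty)=0$ on $B_{1/2}$. This requires upgrading the weak convergence of $\nabla u_k$ to strong $L^p_\mathrm{loc}$ convergence, for which I would apply Minty's monotonicity trick (the Boccardo--Murat device): test the equation against $(u_k-u_\infty)\eta^p$ and exploit the monotonicity of $\xi\mapsto a_k(X,\xi)$ encoded in the second line of \eqref{Hyp a}, using that $f_k\to 0$ in the appropriate dual. Once strong convergence of gradients is secured, passage to the limit in the weak formulation is routine. Taking $h:=u_\infty|_{B_{1/2}}$ then yields an $\overline a$-harmonic function approximated strongly in $L^p(B_{1/2})$ by $u_k$, contradicting the lower bound. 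The principal obstacle is exactly this limit passage, as the simultaneous convergence of the variable coefficients $a_k$, the nonlinearity, and the degenerate $p$-growth must be handled carefully; this is precisely why the authors prefer to cite Lemma 3.2 of \cite{DM01}.
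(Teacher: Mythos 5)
Your proposal follows essentially the same contradiction/compactness argument as the paper: assume failure, extract sequences $u_k,f_k,a_k$, obtain uniform Caccioppoli bounds, pass to weak $W^{1,p}$ and strong $L^p$ limits, use Ascoli on the $a_k$, and invoke Boccardo--Murat-type monotonicity arguments for the gradients before passing to the limit in the weak formulation. The only small discrepancy is one of precision: the paper relies on almost-everywhere convergence of $\nabla u_k$ (which is what Boccardo--Murat delivers directly) together with the $p$-growth of $a_k$, rather than claiming strong $L^p_{\mathrm{loc}}$ convergence of gradients as you state; but this does not change the structure or validity of the argument.
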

\begin{proof}
 Let us assume, for the purpose of contradiction, that the thesis of the Lemma fails. If so, there would exist a $\delta_0 > 0$ and sequences
 $$
     u_k \in W^{1,p}(B_1), \quad \text{and} \quad  f_k \in L_\mathrm{weak}^{\frac{n}{p}}(B_1),
 $$
 satisfying
 \begin{equation}\label{prof comp eq 00}
     \intav{B_1} |u_k(X)|^p dX \le 1,
 \end{equation}
 for all $k\ge1$,
\begin{equation}\label{prof comp eq 01}
            -\nabla \cdot a_k (X, \nabla u_k) = f_k \text{ in } B_1,
\end{equation}
where $a_k$ satisfies \eqref{Hyp a} with constants $\lambda$ and
$\Lambda$ and
\begin{equation}\label{prof comp eq 02}
            \|f_k\|_{ L_\mathrm{weak}^{\frac{n}{p}}(B_1)} = \text{o}(1),
\end{equation}
as $k \to 0$; however
\begin{equation}\label{prof comp eq 03}
            \intav{B_{1/2}} |u_k(X) - h(X)|^p dX \ge \delta_0^p,
\end{equation}
for any solution $h$ to the homogeneous problem \eqref{Hyp Lemma Compactness Homogeneous} in $B_{1/2}$ and all $k \ge 1$.

Now by standard Caccioppoli's type energy estimates, see for instance, Theorem 6.5 and Theorem 6.1 in \cite{Giusti} (notice that $\frac{p^{*}}{p^{*} - 1} < \frac{n}{p}$ within the range $1<p<n$), we verify that there exists a constant $C = C(n, \lambda, \Lambda)$ such that
$$
    \int_{B_{1/2}} |\nabla u_k|^p dX \le C,
 $$
 for all $k \ge 1$. Thus, up to a subsequence, there exists a function $u \in W^{1,p}(B_{1/2})$ for which
\begin{equation}\label{prof comp eq 04}
    u_k \rightharpoonup u \text{ in }W^{1,p}(B_{1/2}) \quad \text{ and } \quad u_k \to u \text{ in } L^p(B_{1/2}).
\end{equation}
In addition, in view of \eqref{prof comp eq 01} and \eqref{prof
comp eq 02}, by classical truncation arguments, see for instance
\cite{BM},  we know
\begin{equation}\label{prof comp eq 05}
    \nabla u_k(X) \to  \nabla u(X)  \quad \text{for a.e. } X  \in B_{1/2}.
\end{equation}
Furthermore, by Ascoli Theorem, up to a subsequence, the sequence
of vector fields $a_k(X, \cdot)$ converges locally uniformly to a
vector field $\overline{a}$ satisfying \eqref{Hyp a}. Given a test
function $\phi \in W^{1,p}_0(B_{1/2})$, from \eqref{prof comp eq
02}, \eqref{prof comp eq 04} and \eqref{prof comp eq 05} we have
$$
    \begin{array}{lll}
        \displaystyle \int_{B_{1/2}} \overline{a}(X, D u) \cdot \nabla \phi dX &=&  \displaystyle \int_{B_{1/2}} {a_k}(X, D u_k) \cdot \nabla \phi dX + \text{o}(1) \\
        &=&  \displaystyle \int_{B_{1/2}} f_k \varphi dX + \text{o}(1)  \\
        & = &  \text{o}(1),
    \end{array}
$$
as $k \to \infty$. Since $\phi$ was arbitrary, we conclude $u$ is a solution to the homogeneous equation in $B_{1/2}$. Finally we reach a contradiction in \eqref{prof comp eq 03} for $k \gg 1$. The proof of Lemma \ref{compactness} is concluded.
\end{proof}

\begin{remark}
Arguing as in \cite{DM01}, Lemma 3.2, it is possible to avoid the passage to the limit in the proof of Lemma \ref{compactness}, obtaining therefore a function $h$, solution to the homogeneous equation $\nabla \cdot a(X, \nabla h) = 0$, for the original vector field $a$. For our purposes though, it suffices to obtain an equation within the same universal class of $a$, \eqref{Hyp a}.
\end{remark}
\section{Optimal BMO estimates} \label{Section proof main}

In this section we shall establish optimal \textit{a priori} estimates for solutions to
$$
    -\nabla \cdot a(X, Du) = f \in L_\text{weak}^\frac{n}{p}(B_1),
$$
which corresponds to the lower borderline integrability condition on $f$. In particular, $L^\infty$ bounds cannot be achieved under such a weak hypothesis. We recall that a measurable function $f$ is said to belong to the
weak-$L^p(B_1)$ space, denoted by $L^p_{\text{weak}}(B_1)$, if
there exists a constant $K > 0$ for which
\begin{equation}\label{Def weak Lp}
    \Leb \left ( \{X \in B_1 : |f(X)| > \tau \} \right ) \le \dfrac{K^p}{\tau^p}.
\end{equation}
The infimum of all $K > 0$ for which \eqref{Def weak Lp} holds is
defined to be the weak-$L^p$ norm of $u$ and it is denoted by
$\|u\|_{L^p_{\text{weak}} (B_1)}$. Weak $L^p$ spaces play a
fundamental role in Harmonic Analysis, in particular in the theory
of singular integrals.  It is well known that $L^p \varsubsetneq
L^p_{\text{weak}}$. Also, if $\mathcal{M}$ denotes the
Hardy-Littlewood maximal operator,  then $\mathcal{M}(f) \in
L^1_{\text{weak}}$ provided $f \in L^1$, and such a result is
optimal in the sense that $\mathcal{M}(f) $ may not belong to
$L^1$. This is the main reason for which Calder\'on-Zygmung theory
fails for sources in $L^1$.

To motivate the result of this section, we invite the readers to notice that a careful inference in the kernel from \eqref{Int - Newton Pot} revels a lower borderline condition for the source function $f$. In fact, $\Gamma \in L^r$ for any $r < \frac{n}{n-2}$, but $\Gamma \not \in L^{\frac{n}{n-2}}$. That is, by H\"older inequality,
$$
    N_f \in L^\infty \text{ whenever } f \in L^{\frac{n}{2} + \epsilon},
$$
since $\frac{n}{2}$ is the dual exponent of $\frac{n}{n-2}$. When
$f \in L^{\frac{n}{2}}$, $n \ge 3$, Calder\'on-Zygmund estimate
\eqref{Int - CZ Est} reveals that
\begin{equation}\label{Int app CZ}
    u \in W^{2,\frac{n}{2}} \hookrightarrow W^{1,n} \hookrightarrow L^q,
\end{equation}
for any $ 1< q < \infty.$ That is, it provides an  \textit{almost optimal regularity result}. By a duality argument, one finds out that it is impossible to bound the $L_\text{loc}^\infty$-norm of $u$ by the $L^{\frac{n}{2}}$ norm of $f$.  However, an application of Poincar\'e inequality combined with \eqref{Int app CZ} gives
\begin{equation}\label{Int getting BMO}
    \intav{B_r} |u - {u}_r|^n dX \le C_n \int_{B_r} |\nabla u|^n dX \le C_n \|f\|_{L^{\frac{n}{2}}},
\end{equation}
where, ${u}_r$ denotes the mean of $u$ over $B_r$, i.e.,
${u}_r :=\intav{B_r(X_0)} udY$.

Recall a function $u \in L^1(B_1)$ for which there exists a constant $K > 0$ such that
\begin{equation}\label{Def BMO}
    \intav{B_r(X_0)} { | u - {u}_r} | \le K,
\end{equation}
for every $X_0 \in B_1$ and $0 < r < \dist(X_0, \partial B_1)$, is
said to belong to the $\text{BMO}$ space. The infimum of all $K
> 0$ for which \eqref{Def BMO} holds is defined to be the
$\text{BMO}$-norm of $u$ and it is denoted by
$\|u\|_{\text{BMO}}$.

The BMO space was originally introduced by John and Nirenberg in
\cite{JN}. In that very same paper,  John and Nirenberg  proved
the following fundamental estimate: if  $\|u\|_{\text{BMO}} \le
1$, then there exist positive dimensional constants $\alpha$ and
$\beta$ such that
\begin{equation}\label{Int JN estimate}
    \intav{B_1} e^{\alpha |u - {u}_{1}|} dX \le \beta.
\end{equation}
The original motivation for studying these functions apparently
came from the theory of elasticity, \cite{J}. Interestingly
enough, John-Nirenberg's estimate for BMO functions \eqref{Int JN
estimate} is used by Moser as a key ingredient in his striking
proof of Harnack inequality for divergence form uniform elliptic
equations.  Both Jonh-Nirenberg and Moser works were published
simultaneously in the same issue: \textit{Comm. Pure Appl. Math.} Vol XIV,
back in 1961.

\par

Through the years, BMO space and its analogues have been shown to
enjoy many other properties, with deep  applications in analysis.
For our purposes, it is elucidative to think the BMO
space as the correct substitute for $L^\infty$ as the endpoint of
the $L^p$ spaces as $p \uparrow +\infty$.
\par

In what follows, we will establish the corresponding sharp BMO estimate for solutions to $p$-degenerate elliptic equations

\begin{equation}\label{Eq studied in BMO section}
    -\nabla \cdot a(X, Du) = f \in L^{\frac{n}{p}}_\text{weak}(B_1).
\end{equation}
where $a$ satisfies the standard structural condition \eqref{Hyp a}.

\begin{theorem}\label{main} Let $u \in W^{1,p}(B_1)$ be a  solution to
$$
    -\nabla \cdot a (X,  Du) = f(X).
$$
Assume $a$ satisfies \eqref{Hyp a} and $f \in L_{\text{weak}}^{\frac{n}{p}}(B_1)$. Then $u \in \text{BMO}(B_{1/2})$. Furthermore,
$$
    \|u\|_{\text{BMO}(B_{1/2})} \le  C \left ( \|f\|^{\frac{1}{p-1}}_{L_{\text{weak}}^{\frac{n}{p}}(B_1)} + \|u\|_{L^p(B_1)} \right ),
$$
for a constant $C$ that depends only on $n$ $p$, $\lambda$ and
$\Lambda$.
\end{theorem}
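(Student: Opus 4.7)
The plan is to prove a scale-invariant $L^p$-oscillation bound that does not decay with the radius, which yields BMO directly. Begin by normalizing: set
$$
A := \|u\|_{L^p(B_1)} + \varepsilon^{-1/(p-1)} \|f\|_{L_\text{weak}^{n/p}(B_1)}^{1/(p-1)},
$$
for a small universal $\varepsilon$ to be fixed, and work with $v = u/A$, which solves a new equation in the same structural class \eqref{Hyp a} with $\|v\|_{L^p(B_1)} \le 1$ and source having weak-$L^{n/p}$ norm at most $\varepsilon$. It then suffices to bound $\|v\|_{\text{BMO}(B_{1/2})}$ by a universal constant.

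The core of the argument is the following dyadic iteration: fix $X_0 \in B_{1/2}$ and claim that there exist universal $\rho \in (0, 1/4)$ and $K \ge 1$, and a sequence $\{c_k\}_{k \ge 0}$ such that
$$
\intav{B_{\rho^k}(X_0)} |v - c_k|^p \, dY \le K^p.
$$
For the inductive step, rescale $V_k(X) := v(X_0 + \rho^k X) - c_k$; then $V_k$ solves $-\nabla \cdot \widehat{a}_k(X, \nabla V_k) = \widehat{f}_k$ on $B_1$ with $\widehat{a}_k$ again in the universal class and $\widehat{f}_k(X) = \rho^{kp} \widehat{f}(X_0 + \rho^k X)$. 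The precise reason $n/p$ is the critical exponent is that an elementary change of variables yields $\|\widehat{f}_k\|_{L_\text{weak}^{n/p}(B_1)} \le \|\widehat{f}\|_{L_\text{weak}^{n/p}(B_1)} \le \varepsilon$ with no loss. Applying Lemma \ref{compactness} to $V_k/K$ produces a function $h_k$ solving a homogeneous equation on $B_{1/2}$ for some $\overline{a}_k$ in the universal class, with $\intav{B_{1/2}} |V_k - K h_k|^p \le K^p \delta^p$. The De Giorgi--Nash--Moser theory then gives $[h_k]_{C^{\alpha_0}(B_{1/4})} \le C^\star$ universally. Choosing $c_{k+1} := c_k + K h_k(0)$ and expanding the $p$-power yields
$$
\intav{B_{\rho^{k+1}}(X_0)} |v - c_{k+1}|^p \le 2^{p-1} K^p \left[(2\rho)^{-n} \delta^p + (C^\star)^p \rho^{\alpha_0 p}\right].
$$

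To close the induction I first fix $\rho$ small so that $2^{p-1}(C^\star)^p \rho^{\alpha_0 p} \le 1/2$, then fix $\delta$ (and hence $\varepsilon = \varepsilon(\delta)$ via Lemma \ref{compactness}) so that $2^{p-1}(2\rho)^{-n} \delta^p \le 1/2$. The base case $k=0$ is immediate, with $K$ chosen large enough to absorb universal volume ratios. For an arbitrary scale $r \in (0, 1/2)$, picking $k$ with $\rho^{k+1} \le r < \rho^k$ and comparing volumes gives $\intav{B_r(X_0)} |v - c_k|^p \le \rho^{-n} K^p$, whence $\|v\|_{\text{BMO}(B_{1/2})}$ is universally bounded by Jensen; undoing the normalization through $A$ returns the stated estimate. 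The principal technical obstacle is the careful calibration of the parameters in the order $\rho \to \delta \to \varepsilon$: the $C^{\alpha_0}$-term must be dominated by a small power of $\rho$ before the compactness threshold is chosen, and the critical scale-invariance of the weak-$L^{n/p}$ norm is indispensable so that the induction constant $K$ does not deteriorate with $k$.
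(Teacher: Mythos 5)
Your proposal is correct and takes essentially the same route as the paper: a normalization that reduces the source to a universally small weak-$L^{n/p}$ norm, a compactness step (Lemma \ref{compactness}) producing an $a$-harmonic competitor $h$, and a geometric iteration on shrinking balls whose closure relies on the exact scale invariance of the weak-$L^{n/p}$ norm (computed as in the paper's estimate \eqref{proof main Eq04}). The only substantive deviations are stylistic: you iterate around an arbitrary $X_0 \in B_{1/2}$, which dispenses with the ``standard covering argument'' the paper invokes at the end, and you track the constants $c_k$ by accumulating $K\,h_k(0)$ rather than by taking averages and appealing to inequality \eqref{fact}; both choices are equivalent to the paper's.

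One small gap worth repairing: the base case $k=0$ is not ``immediate,'' because for $X_0 \neq 0$ the ball $B_{\rho^0}(X_0) = B_1(X_0)$ is not contained in the domain $B_1$, so the claimed oscillation bound cannot even be evaluated there. The fix is to start the iteration at $k_0 \ge 1$ with $\rho^{k_0} \le 1-|X_0|$ (say $\rho^{k_0} \le 1/2$), seeding it from $\intav{B_1}|v|^p \le 1$ via a volume ratio and a suitably large $K$ — equivalently, rerun your argument on the balls $B_{\tfrac{1}{2}\rho^k}(X_0)$. This is cosmetic and does not affect the correctness of the strategy.
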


\medskip

In view of the parallel described above to the linear theory,  the
estimate from Theorem \ref{main} should be optimal. Indeed, this
is the case. For instance, say, for $p <n$, if we set $f(X) =
|X|^{-p}$, it is easy to see that $f \in
L^{\frac{n}{p}}_{\text{weak}}$. Solving $\Delta_p u = f$ with
constant boundary data on $\partial B_1$ one finds $u(X) = c_{n,p}
\cdot \ln |X|$, which is in $\text{BMO}$ but not in $L^\infty$.

The proof of Theorem \ref{main} will be based on the compactness result granted in Lemma  \ref{compactness} and  an iterative scheme. Next Lemma is pivotal to our strategy.

\begin{lemma}\label{key lemma} Let $u \in W^{1,p}(B_1)$ be a weak solution to \eqref{Eq},
with $\intav{B_1} |u|^p dX \le 1$. There exist constants $0 <
\varepsilon_0 \ll 1$, $0 < \lambda_0 \ll 1/2$, that depend only on
$n$, $p$, $\lambda$ and $\lambda$, such that if
\begin{equation}\label{Hyp key Lemma}
    \|f\|_{ L_\text{weak}^{\frac{n}{p}}(B_1)}  \le \varepsilon_0,
\end{equation}
then
\begin{equation}\label{Thesis key Lemma}
    \intav{B_{\lambda_0}} |u(X) - (\intav{B_{\lambda_0}} udY) |^p dX  \le 1.
\end{equation}
\end{lemma}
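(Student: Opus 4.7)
The strategy is a classical Caffarelli-type argument: invoke the compactness Lemma~\ref{compactness} to approximate $u$ by a universally regular function $h$, and then transfer the oscillation control from $h$ to $u$ via the triangle inequality. The universal regularity of $h$ comes from the well-established De Giorgi--Nash--Moser and Tolksdorf/DiBenedetto/Ural'tseva theory for $a$-harmonic functions under the structural hypothesis \eqref{Hyp a}.

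More precisely, I will proceed in four steps. First, fix parameters $\lambda_0 \in (0, 1/8)$ and $\delta > 0$ whose precise values are to be chosen. Applying Lemma \ref{compactness} with this $\delta$ yields $\varepsilon_0 = \varepsilon(\delta) > 0$ such that, whenever $\|f\|_{L^{n/p}_{\mathrm{weak}}(B_1)} \leq \varepsilon_0$, one finds $h \in W^{1,p}(B_{1/2})$ solving
$$
-\nabla \cdot \overline{a}(X, \nabla h) = 0 \quad \text{in } B_{1/2}
$$
for some $\overline{a}$ in the same universal class \eqref{Hyp a}, with $\intav{B_{1/2}} |u - h|^p \, dX \leq \delta^p$.

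Second, I control $h$: since $\intav{B_{1/2}} |h|^p \leq 2^{p-1}(\intav{B_{1/2}} |u|^p + \delta^p) \leq C_n(1 + \delta^p)$, the standard $L^\infty$--$L^p$ Moser-type estimate for homogeneous $p$-degenerate equations gives $\|h\|_{L^\infty(B_{1/4})} \leq C_1$, and the classical interior H\"older regularity for $a$-harmonic functions yields a universal exponent $\alpha_0 \in (0,1)$ and a constant $C_2 = C_2(n,p,\lambda,\Lambda)$ such that $\|h\|_{C^{\alpha_0}(B_{1/8})} \leq C_2$. Consequently, for every $X \in B_{\lambda_0}$ with $\lambda_0 \leq 1/8$,
$$
\bigl| h(X) - h(0) \bigr| \leq C_2 \, \lambda_0^{\alpha_0},
\qquad \text{so that} \qquad \intav{B_{\lambda_0}} \bigl|h - (h)_{\lambda_0}\bigr|^p \, dX \leq (2 C_2)^p \, \lambda_0^{p \alpha_0}.
$$

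Third, the triangle inequality, together with the fact that the mean is the best $L^p$-constant up to a factor $2$, gives
$$
\intav{B_{\lambda_0}} \bigl| u - (u)_{\lambda_0} \bigr|^p \, dX \;\leq\; 2^{2p-1}\!\left( \intav{B_{\lambda_0}} |u - h|^p \, dX + \intav{B_{\lambda_0}} \bigl| h - (h)_{\lambda_0} \bigr|^p \, dX \right).
$$
Since $\intav{B_{\lambda_0}} |u - h|^p \, dX \leq (2\lambda_0)^{-n}\int_{B_{1/2}}|u - h|^p\,dX \leq (2\lambda_0)^{-n} |B_{1/2}| \delta^p$, the right-hand side is bounded by $C_3 \bigl(\lambda_0^{-n} \delta^p + \lambda_0^{p\alpha_0}\bigr)$ for a universal $C_3$.

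Finally, I choose the parameters in order: first take $\lambda_0$ so small that $C_3 \lambda_0^{p\alpha_0} \leq 1/2$, then take $\delta$ so small that $C_3 \lambda_0^{-n} \delta^p \leq 1/2$, and define $\varepsilon_0 := \varepsilon(\delta)$ via Lemma~\ref{compactness}. The two-parameter selection is the only delicate point: one must fix the geometric scale $\lambda_0$ \emph{before} fixing the approximation error $\delta$, since the $L^p$ closeness on $B_{1/2}$ must be weighed against the shrinking measure of $B_{\lambda_0}$. With this ordering the argument is clean, and the lemma follows. This same hierarchy of choices is what will ultimately make the iterative scheme produce the sharp BMO bound in Theorem~\ref{main}.
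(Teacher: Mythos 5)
Your proof is correct and follows essentially the same route as the paper: approximate $u$ by an $a$-harmonic $h$ via Lemma~\ref{compactness}, use the interior $C^{\alpha_0}$ estimate for $h$, split by the triangle inequality, and choose $\lambda_0$ before $\delta$ (and hence before $\varepsilon_0$). The only cosmetic difference is that the paper uses $h(0)$ as the intermediate constant while you use $(h)_{\lambda_0}$, and the paper picks its numerical constants to land exactly at $1$; the crucial order of parameter selection you highlight is indeed the heart of the argument.
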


\begin{proof}
 Initially let us recall a general inequality:
\begin{equation}\label{fact}
    \intav{B_{r}} \left |u - \intav{B_r} udY \right |^p dX \le 2^p \intav{B_{r}} \left | u - \gamma \right |^p dX,
\end{equation}
for any $u \in L^p$ and any real number $\gamma$. Indeed,  by triangular inequality,
$$
    \begin{array}{lll}
         \displaystyle \left ( \intav{B_{r}} \left |u - \intav{B_r} udY \right |^p dX \right )^{1/p}  &\le&  \displaystyle  \left ( \intav{B_{r}} \left |u -\gamma   \right |^p dX \right )^{1/p} + \left |\intav{B_r} udY  - \gamma \right | \\
          &\le& \displaystyle  \left ( \intav{B_{r}} \left |u -\gamma   \right |^p dX \right )^{1/p} + \intav{B_r} |u - \gamma| dX \\
          &\le & 2  \displaystyle  \left ( \intav{B_{r}} \left |u -\gamma   \right |^p dX \right )^{1/p}.
    \end{array}
$$

In view of Lemma \ref{compactness}, let $h$ be a solution to the homogeneous equation in $B_{1/2}$ such that
\begin{equation}\label{proof key Lemma Eq01}
    \intav{B_{1/2}} |u(X) - h(X)|^p dX \le \dfrac{7 \lambda_0^{n}}{9 \cdot 2^{2p-1}},
\end{equation}
for $\lambda_0 \ll 1/2$ to be regulated soon. Such a choice will determine $\varepsilon_0$. Notice that \eqref{proof key Lemma Eq01} implies $\intav{B_{1/2}} |h(X)|^p dX \le C$, thus, by regularity theory for homogeneous equation, there exists a constant $C>0$ universal such that
$$
    |h(X) - h(0)|  \le C  |X|^{\alpha_0},
$$
where $C$ that depends only on $n$, $p$, $\lambda$ and $\Lambda$. Next, for $\lambda_0 \ll 1/2$ to be chosen, we estimate
\begin{equation}\label{proof key Lemma Eq03}
    \begin{array}{lll}
        \displaystyle \intav{B_{\lambda_0}} |u(X) - h(0)|^p dX &\le&  2^{p-1} \left ( \displaystyle \intav{B_{\lambda_0}} |u(X) - h(X)|^p dX + \displaystyle                \intav{B_{\lambda_0}} |h(X) - h(0)|^p dX \right ) \\
        & \le & \dfrac{7 }{9 \cdot 2^{p}} + C 2^{p-1}   \cdot  \lambda_0^{\alpha_0 p}.
    \end{array}
\end{equation}
Now we can choose $\lambda_0$, depending on dimension $n$ and $p$, $\lambda$ and $\Lambda$  so small that
\begin{equation}\label{proof key Lemma Eq04}
    C 2^{p-1}   \cdot  \lambda_0^{\alpha_0 p} \le \dfrac{2}{9 \cdot 2^{p}},
\end{equation}
and the proof of Lemma \ref{key lemma} follows from \eqref{proof key Lemma Eq03} and \eqref{fact}.
\end{proof}

\par

\noindent\textit{Proof of Theorem \ref{main}.}
Let $u$ be a weak solution to
$$
    -\nabla \cdot a(X, D u) = f(X), \quad \text{ in } B_1.
$$
The proof starts off with a renormalization. Let $\varepsilon_0$ be the universal constant from Lemma \ref{key lemma}. If we change $u$ by $\kappa u$, with $\kappa \ll 1$, so small that
$$
    \kappa^{p-1} \le \dfrac{\varepsilon_0}{\|f\|_{L_\text{weak}^{\frac{n}{p}}(B_1)}} \quad \text{and} \quad \intav{B_1}{ |\kappa \cdot u|^p} dX  \le 1,
$$
we can assume $u$ and $f$ are under the hypotheses of Lemma \ref{key lemma}. In the sequel, we will show
\begin{equation}\label{induction}
    \intav{B_{\lambda_0^k}} \left | u - c_k \right |^p dX \le 1, \quad \forall k \in \mathbb{N}.
\end{equation}
Here $\lambda_0$ is the universal number from Lemma \ref{key lemma} and $c_k$ denotes the average of $u$ over the ball of radio $\lambda_0^{k}$, i.e.,
$$
    c_k := \intav{B_{\lambda_0^k}} u(X) dX.
$$
We show \eqref{induction} by induction. The case $k=1$ follows directly from Lemma \ref{key lemma}. Assume we have verified \eqref{induction} for $k$. We define the real function $v \colon B_1 \to \mathbb{R}$ by
\begin{equation}\label{proof main Eq01}
    v(X) :=    {u (\lambda^k_0 X) - c_k}
\end{equation}
We also define
\begin{equation}\label{proof main Eq02}
    a_{\lambda_0^k} (X, \xi) := a( {\lambda_0^k} X,  \xi) \quad \text{ and } \quad {L}_{\lambda_0^k} \phi := - \nabla \cdot  a_{\lambda_0^k}(X, D\phi).
\end{equation}
Notice that $a_{\lambda_0^k}$ is also $p$-degenerate elliptic, with the same ellipticity constants as $a$. From the induction assumption, we have
\begin{equation}\label{proof main Eq03.1}
    \intav{B_1} |v(X)|^p dX =   \intav{B_{\lambda_0^k}} |u(Y) - c_k|^p dY \le  1.
\end{equation}
Easily one verifies that
\begin{equation}\label{proof main Eq03}
    \left | {L}_{\lambda_0^k} v (X)\right | \le   {\lambda_0^{kp}} \left | f({\lambda_0^k} X) \right |, \quad \text{ a.e. in } B_1.
\end{equation}
If we label $f_{\lambda_0^k} : =  {\lambda_0^{pk}} \left | f({\lambda_0^k} X) \right |$, a direct computation reveals
\begin{equation}\label{proof main Eq04}
    \begin{array}{lll}
        \Leb \left (\{ X\in B_1 : |f_{\lambda_0^k}| > \tau \} \right ) &=&   \Leb \left (\{ X\in B_1 : |f| \ge \frac{\tau}{ {\lambda_0^{kp}} }   \right ) \cdot {\lambda_0^{-nk}} \\
        &\le & \dfrac{\|f\|^{\frac{n}{p}}_{L_\text{weak}^{\frac{n}{p}}} }{\tau^{\frac{n}{p}}}.
    \end{array}
\end{equation}
That is
\begin{equation}\label{proof main Eq04.1}
    \|f_{\lambda_0^k}\|_{L_\text{weak}^{\frac{n}{p}}} \le  \|f\|_{L_\text{weak}^{\frac{n}{p}}} \le \varepsilon_0.
\end{equation}
We have verified that $v$ is under the hypotheses of Lemma \ref{key lemma}, which assures
\begin{equation}\label{proof main Eq05}
    \intav{B_{\lambda_0}} |v(X) - (\intav{B_{\lambda_0}} vdY) |^p dX = \intav{B_{\lambda^{k+1}_0}} |u(X) - c_{k+1}) |^p dX \le 1.
\end{equation}
This concludes the proof of \eqref{induction}. Finally, given $0<r \ll 1$, let $m \in \mathbb{N}$ be such that
$$
    \lambda_0^{m+1} \le r < \lambda_0^m.
$$
If we label $u_r := \intav{B_r} udY$, we estimate,
$$
    \begin{array}{lll}
        \displaystyle \intav{B_r} |u - u_r|^p dX &\le&2^p \displaystyle \intav{B_r} |u - \lambda_0 u_r|^p dX \\
        &\le & \displaystyle \dfrac{2^p}{\lambda_0} \intav{B_{\lambda_0^m}} |u - c_m|^p dX \\
        & \le & C.
    \end{array}
$$
The proof of Theorem \ref{main} can be now concluded my means of a standard covering argument, which we shall omit here. \qed

\bigskip

We finish up this section by highlighting once more that the strategy used in our reasoning to establish Theorem \ref{main} is indeed quite flexible. It is based on a fine scaling balance between the norm of the source $f$ and the homogeneity of the equation itself. This indicates that similar analysis should be possible to be carried on for equations with measure data, provided the solution already lies in a proper Sobolev space, under the classical diffusion assumption $|f|(B_r) \le C r^{n-p}$, for any ball $B_r$ of radius $r$. For that, though, one needs to revisit the proof of Lemma \ref{compactness} and work under appropriate notion of solutions through truncation. We do not intend to pursue that in this present paper.

\section{$C^\alpha$ regularity} \label{Calpha theory}

In this section we turn our attention to optimal regularity estimates to Equation \eqref{Eq} when the source function $f$ lies in  a
slightly better space, say, $f \in L_\text{weak}^{\frac{n}{p} + \epsilon}$. In this case, heuristic scaling methods indicate that weak solutions
should be locally bounded. Indeed,  under slightly stronger assumptions on $f$,
boundedness or even continuity of solutions can be delivered by known methods, for instance through
Serrin's Harnack inequality \cite{S2}. Nevertheless this approach hardly reveals the sharp H\"older exponent of continuity of the solution.

\par

In this section we still work under assumption \eqref{Hyp a}. As we have already invoked, it is classical, see for instance \cite{S2}, that $W^{1,p}$ solutions to the homogeneous equation
\begin{equation} \label{Int - Gen PE Hom}
    -\nabla \cdot a(X,Du) = 0, \quad \text{ in } B_1,
\end{equation}
are $\alpha_0$-H\"older continuous in $B_{1/2}$ and
\begin{equation} \label{Int - Gen PE Hom Estimate}
    \|u\|_{C^{\alpha_0}(B_{1/2})} \le C(n,\lambda,\Lambda, p) \|u\|_{L^p(B_1)}.
\end{equation}
The optimal exponent $\alpha_0$ in \eqref{Int - Gen PE Hom
Estimate} depends only upon dimension, $p$ and ellipticity
constants $\lambda$, and $\Lambda$. In general $\alpha_0 < 1$ and its precise value is unknown.

\begin{theorem}\label{main 2}  Let $u \in W^{1,p}(B_1)$ be a  solution to
\begin{equation}\label{Eq main 2}
    -\nabla \cdot a (X, Du) = f(X)
\end{equation}
Assume \eqref{Hyp a} and $f \in L_{\text{weak}}^{\theta \cdot \frac{n}{p}}(B_1)$, $1 < \theta  < p$. Then $u \in C^\alpha(B_{1/2})$, for
\begin{equation}\label{thm 2 alpha restriction}
    \alpha = \min \{ \frac{p}{p-1} \cdot \frac{\theta-1}{\theta}, \alpha_0^{-} \},
\end{equation}
where $\alpha_0$ is the universal optimal H\"older exponent for solutions to $ -\nabla \cdot a (X, Du) = 0$.
Furthermore,
$$
    \|u\|_{C^\alpha(B_{1/2})} \le  C(n,\lambda, \Lambda, p, \theta) \left ( \|f\|^{\frac{1}{p-1}}_{L_{\text{weak}}^{\theta \cdot \frac{n}{p}}(B_1)} + \|u\|_{L^p(B_1)} \right ).
$$
\end{theorem}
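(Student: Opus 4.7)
The plan is to adapt the compactness-plus-iteration strategy used for Theorem \ref{main}, but now targeting a $\lambda_0^{\alpha}$-geometric decay of oscillations rather than mere boundedness of mean oscillations. The target exponent $\alpha = \min\{\frac{p}{p-1}\cdot\frac{\theta-1}{\theta},\alpha_0^{-}\}$ will arise naturally as the largest value reconciling two independent scaling constraints: one coming from the $C^{\alpha_0}$ regularity of the frozen homogeneous limit, the other from the behavior of the source under the iterative zoom-in.

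After a preliminary normalization (replacing $u$ by $\kappa u$ with $\kappa^{p-1}$ small), the core one-scale estimate I would establish is: given $\alpha < \alpha_0$ with $\alpha \le \frac{p}{p-1}\cdot\frac{\theta-1}{\theta}$, there exist universal $0 < \lambda_0 \ll 1/2$ and $0 < \varepsilon_0 \ll 1$ such that if $\intav{B_1}|u|^p\,dX \le 1$ and $\|f\|_{L^{\theta n/p}_{\text{weak}}(B_1)} \le \varepsilon_0$, then one can find $c \in \mathbb{R}$ with
$$\intav{B_{\lambda_0}}|u-c|^p\,dX \le \lambda_0^{\alpha p}.$$
This is where Lemma \ref{compactness} enters: since $L^{\theta n/p}_{\text{weak}} \hookrightarrow L^{n/p}_{\text{weak}}$ with controlled norm on $B_1$, I can produce an approximant $h$ solving a homogeneous equation of the same universal class, whose $\alpha_0$-H\"older regularity \eqref{Int - Gen PE Hom Estimate} gives $|h(X)-h(0)| \le C|X|^{\alpha_0}$. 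Choosing $c := h(0)$ and splitting the integral, the term $\intav{B_{\lambda_0}}|h-h(0)|^p\,dX$ is bounded by $C\lambda_0^{\alpha_0 p}$, which the strict inequality $\alpha < \alpha_0$ allows one to absorb below $\tfrac{1}{2}\lambda_0^{\alpha p}$ by taking $\lambda_0$ small. The remaining term $\intav{B_{\lambda_0}}|u-h|^p\,dX$ is then driven below $\tfrac{1}{2}\lambda_0^{\alpha p}$ by selecting $\delta$ (hence $\varepsilon_0$) small enough in Lemma \ref{compactness}.

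The iterative scheme then produces a sequence of constants $c_k := \intav{B_{\lambda_0^k}}u\,dY$ satisfying $\intav{B_{\lambda_0^k}}|u-c_k|^p\,dX \le \lambda_0^{k\alpha p}$ for all $k$. The induction step is implemented by rescaling
$$v_k(X) := \lambda_0^{-\alpha k}\bigl[u(\lambda_0^k X) - c_k\bigr],$$
and a direct computation shows $v_k$ solves $-\nabla\cdot \widetilde{a}_k(X,\nabla v_k) = f_k$, where $\widetilde{a}_k(X,\xi) := \lambda_0^{-(\alpha-1)(p-1)k}\, a(\lambda_0^k X,\lambda_0^{(\alpha-1)k}\xi)$ still satisfies \eqref{Hyp a} with the same ellipticity constants $\lambda,\Lambda$, and $f_k(X) = \lambda_0^{k[p-\alpha(p-1)]}\,f(\lambda_0^k X)$. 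The crucial scaling identity for the distributional function then yields
$$\|f_k\|_{L^{\theta n/p}_{\text{weak}}(B_1)}^{\theta n/p} \le \lambda_0^{\,k\bigl(\theta[p-\alpha(p-1)]\,n/p\,-\,n\bigr)}\,\|f\|_{L^{\theta n/p}_{\text{weak}}(B_1)}^{\theta n/p},$$
and the exponent of $\lambda_0$ is nonnegative \emph{precisely} when $\alpha \le \frac{p}{p-1}\cdot\frac{\theta-1}{\theta}$. Under this restriction the rescaled source retains the smallness required to reapply the one-scale estimate to $v_k$, and the induction closes.

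Finally, the geometric decay $\intav{B_{\lambda_0^k}}|u-c_k|^p \le \lambda_0^{k\alpha p}$ forces $|c_{k+1}-c_k| \le C\lambda_0^{k\alpha}$, so $\{c_k\}$ is Cauchy and a Campanato-type argument delivers $u \in C^\alpha$ at the origin with the advertised estimate; a standard covering argument extends this to $B_{1/2}$, and undoing the normalization inserts the $\|f\|^{1/(p-1)}$ factor. The main obstacle I anticipate is the tight bookkeeping in the rescaling step: verifying that $\widetilde{a}_k$ genuinely preserves \eqref{Hyp a} with the \emph{same} constants (which is what makes Lemma \ref{compactness} reapplicable at every generation), and that the distributional-function computation for $f_k$ really does pin down the exponent balance above. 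It is precisely this delicate balance that forces both constraints $\alpha \le \frac{p(\theta-1)}{\theta(p-1)}$ and $\alpha < \alpha_0$ to emerge as the sharp threshold.
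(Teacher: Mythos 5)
Your strategy is essentially the paper's: compactness approximation (Lemma \ref{compactness}) by an $\alpha_0$-H\"older $a$-harmonic function, a one-scale decay estimate, geometric iteration by zooming in, and the distributional-function scaling computation that pins down the threshold $\alpha \le \frac{p}{p-1}\cdot\frac{\theta-1}{\theta}$. The rescaled vector field $\widetilde{a}_k$ and its ellipticity constants, the identity $f_k(X) = \lambda_0^{k[p-\alpha(p-1)]} f(\lambda_0^k X)$, and the resulting weak-norm bound are all computed correctly; the two constraints $\alpha < \alpha_0$ (to absorb $\lambda_0^{p\alpha_0}$ below $\lambda_0^{p\alpha}$) and $\alpha \le \frac{p(\theta-1)}{\theta(p-1)}$ (to keep the rescaled weak norm uniformly small) emerge exactly as in the paper.

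There is, however, one genuine gap in the bookkeeping of the centers. You set $c_k := \intav{B_{\lambda_0^k}} u\,dY$ and assert the iteration yields $\intav{B_{\lambda_0^k}}|u-c_k|^p \le \lambda_0^{k\alpha p}$. But the one-scale estimate naturally produces the center $h(0)$ (the value at the origin of the homogeneous approximant), not the average of $u$. Passing from $h(0)$ to the average of $u$ costs the elementary factor from
$\intav{B_r}|u - \intav{B_r}u\,dY|^p \le 2^p \intav{B_r}|u-\gamma|^p$,
and with a geometrically decaying target this factor of $2^p$ compounds at every generation, destroying the induction. For the BMO statement (Theorem \ref{main}) the target is a constant and the $2^p$ is harmless, which is why averages work there; for the $C^\alpha$ statement the paper instead iterates with the recursively defined sequence $\mu_{m+1} := \mu_m + \lambda_1^{m\alpha} h_m(0)$, proves $\{\mu_k\}$ is Cauchy, and only at the end extracts the Campanato decay. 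If you insist on averages, you must sharpen the one-scale estimate to produce, say, $2^{-p}\lambda_0^{\alpha p}$ rather than $\lambda_0^{\alpha p}$, so the $2^p$ is absorbed once per step. Either fix is routine, but as written the induction does not close.
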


\medskip

The sharp relation in (\ref{thm 2 alpha restriction}) should be read as follows:
\begin{equation}\label{thm 2 alpha restriction explained}
\left |
\begin{array}{llll}
    \text{If} & \frac{p}{p-1} \cdot \frac{\theta-1}{\theta} < \alpha_0 & \text{ then } & u \in C_\text{loc}^{\frac{p}{p-1} \cdot \frac{\theta-1}{\theta}}.  \\
    \text{If} &\frac{p}{p-1} \cdot \frac{\theta-1}{\theta} \ge \alpha_0&  \text{ then } & u \in C^{\alpha}_\text{loc},  \text{ for any } \alpha < \alpha_0.
\end{array}
\right.
\end{equation}

The proof of Theorem \ref{main 2} will be given in subsection \ref{Section proof main 2} below. Optimality of the thesis of Theorem \ref{main 2} can be checked directly by computing in the unit ball, $B_1$
$$
    \Delta_p |X|^{\frac{p}{p-1} \cdot \frac{\theta-1}{\theta}} = c |X|^{-\frac{p}{\theta}} \in L^{\theta \cdot \frac{n}{p}}_\text{weak}.
$$
It is interesting to notice that $|X|^{-\frac{p}{\theta}}$ is not in the
classical Lebesgue space $L^{\theta \cdot \frac{n}{p}}$.
A valuable feature of Theorem \ref{main 2} is the fact
that it provides universal bounds, i.e., H\"older estimates that
depend only on ellipticity and $p$-degeneracy feature of the operator.
This is particularly important in homogenization problems. However, under continuity (or some sort of VMO condition) on  the medium, we can show that solutions to the homonegenous equation
$$
    -\nabla \cdot a (X, Du) = 0,
$$
are $C^\alpha$ for every $\alpha < 1$. Indeed this fact is an immediate consequence of our next Theorem.

\par

In the sequel, we shall slightly improve the thesis of Theorem \ref{main 2}, provided the medium has some sort of continuity property. For simplicity purposes, for the next Theorem, we shall work under classical continuity assumption on the operator $a$ with respect to the $X$ variable. That is, there exists a modulus of continuity $\tau$ such that
\begin{equation}\label{Cont media} \tag{C}
   |a(X, \xi) - a(Y, \xi)| \le \tau(|X-Y|)|\xi|^{p-1}.
\end{equation}

We remark that under the structural assumption \eqref{Hyp a} solutions to the homogeneous, constant coefficient equation
have a priori $C^{1,\epsilon}$ estimates for $X_0 \in B_{1/2}$ fixed. That is
\begin{equation}\label{C1,alpha theory}
    -\nabla a(X_0, Dh) = 0, ~ B_1 \quad \text{implies} \quad \|h\|_{C^{1,\epsilon}(B_{2/3})} \le C(n,p,\lambda, \Lambda) \|h\|_{L^p(B_1)},
\end{equation}
for some $0< \epsilon < 1$ that depends only on $p$, $n$, $\lambda$ and $\Lambda$, see, for instance, \cite{DiBenedetto}.

\begin{theorem}\label{main 3}  Let $u \in W^{1,p}(B_1)$ be a solution to
\begin{equation}\label{Eq main 3}
    -\nabla \cdot a (X, Du) = f(X).
\end{equation}
Assume \eqref{Hyp a}, \eqref{Cont media} and that $f \in L^{\theta \cdot \frac{n}{p}}(B_1)$, $1 < \theta < p$. Then $u \in C^{ \frac{p}{p-1} \cdot \frac{\theta-1}{\theta}}(B_{1/2})$ and furthermore,
$$
    \|u\|_{C^{ \frac{p}{p-1} \cdot \frac{\theta-1}{\theta}}(B_{1/2})} \le  C(n,\lambda, \Lambda, p, \tau, \theta) \left ( \|f\|_{L_{\text{weak}}^{\theta \cdot \frac{n}{p}}(B_1)} + \|u\|_{L^p(B_1)} \right ).
$$
\end{theorem}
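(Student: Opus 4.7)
The plan is to run the same compactness-plus-iteration scheme used in the proof of Theorem \ref{main 2}, but to exploit the continuity assumption \eqref{Cont media} in order to replace the merely $C^{\alpha_0}$-regular approximating solution by a solution of the \emph{frozen-coefficient} equation $-\nabla\cdot a(X_0,Dh)=0$, which by \eqref{C1,alpha theory} is of class $C^{1,\epsilon}$. Since the target exponent $\alpha:=\tfrac{p}{p-1}\cdot\tfrac{\theta-1}{\theta}$ is strictly less than $1$ (because $\theta<p$), the $C^{1,\epsilon}$ estimate for $h$ is far stronger than what is needed, and the bottleneck $\alpha_0^{-}$ that limits Theorem \ref{main 2} simply vanishes.

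The first step is a refined version of Lemma \ref{compactness} in which the approximating $h$ solves the frozen equation at a base point. Concretely, for each $X_0\in B_{1/2}$ and each $\delta>0$, one shows there exist $r,\varepsilon>0$, depending on the universal data, on the modulus $\tau$, and on $\delta$, such that if $\intav{B_r(X_0)}|u|^p\,dX\le 1$ and $\|f\|_{L^{\theta n/p}_{\mathrm{weak}}}\le\varepsilon$, then one can find $h$ solving $-\nabla\cdot a(X_0,Dh)=0$ in $B_{r/2}(X_0)$ with $\intav{B_{r/2}(X_0)}|u-h|^p\,dX<\delta^p$. The proof is the same compactness-by-contradiction as Lemma \ref{compactness}; the only new ingredient is that, thanks to \eqref{Cont media}, the rescaled vector fields $a(X_0+rY,\cdot)$ converge uniformly on compacta to $a(X_0,\cdot)$, so the limit equation is forced to be the constant-coefficient one, provided $r$ is small enough that $\tau(r)$ is negligible compared to $\delta$.

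Given this refined compactness, the one-step decay becomes cleaner than the one underlying Theorem \ref{main}: combining $|h(X)-h(X_0)|\le C|X-X_0|$ from \eqref{C1,alpha theory} with $L^p$-closeness, and selecting a universal $\lambda_0\ll 1$ such that $C\lambda_0^p\le\tfrac{1}{2}\lambda_0^{p\alpha}$ (possible precisely because $\alpha<1$), one obtains $\intav{B_{\lambda_0}(X_0)}|u-\bar u|^p\,dX\le\lambda_0^{p\alpha}$, where $\bar u:=\intav{B_{\lambda_0}(X_0)}u\,dY$. The iteration then mimics the proof of Theorem \ref{main}: setting $v_k(X):=\lambda_0^{-k\alpha}\bigl(u(X_0+\lambda_0^k X)-c_k\bigr)$ with $c_k:=\intav{B_{\lambda_0^k}(X_0)}u\,dY$, the rescaled source reads $f_k(X)=\lambda_0^{k[p-(p-1)\alpha]}f(X_0+\lambda_0^k X)$, and a direct computation of the distribution function shows $\|f_k\|_{L^{\theta n/p}_{\mathrm{weak}}}\le\|f\|_{L^{\theta n/p}_{\mathrm{weak}}}$ \emph{exactly} when $\alpha\le\tfrac{p}{p-1}\cdot\tfrac{\theta-1}{\theta}$, which pins down the sharp exponent. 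Inductively one gets $\intav{B_{\lambda_0^k}(X_0)}|u-c_k|^p\,dX\le\lambda_0^{kp\alpha}$, and the standard Campanato characterization of $C^\alpha$, combined with a covering argument over $X_0\in B_{1/2}$, yields the stated estimate.

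The principal obstacle is the refined compactness step: one must verify that the limit of the rescaled solutions solves the \emph{frozen}-coefficient equation at $X_0$, not merely an equation in the universal class \eqref{Hyp a}. This forces one to track the modulus $\tau$ carefully through each rescaling; since $\tau(\lambda_0^k r)\to 0$ as $k\to\infty$, the compactness may in fact be applied uniformly along the iteration, and no further smallness on $r$ beyond the first step is required.
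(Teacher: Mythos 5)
Your proposal matches the paper's proof in all essentials: a refined compactness lemma (the paper's Lemma~\ref{lemma comp cont media}) producing a \emph{constant-coefficient} approximant under \eqref{Cont media}, a one-step $L^p$-decay at scale $\lambda_0$ exploiting the $C^{1,\epsilon}$ theory \eqref{C1,alpha theory} (the paper's Lemma~\ref{key lemma CM}), a computation of the rescaled source's weak norm pinning down $\alpha=\tfrac{p}{p-1}\cdot\tfrac{\theta-1}{\theta}$, and the iteration from Section~\ref{Section proof main 2}. The only differences are cosmetic: you center the excess with the averages $c_k:=\intav{B_{\lambda_0^k}(X_0)}u\,dY$ (which closes the induction cleanly provided, as you do, the one-step lemma itself is formulated with the average), whereas the paper centers with a recursively built sequence $\mu_{m+1}=\mu_m+\lambda_1^{m\alpha}h_m(0)$; and you phrase the compactness step with an explicit base-point $X_0$ and a small radius $r$ with $\tau(r)$ small, whereas the paper normalizes $X_0=0$, $r=1$ and encodes the smallness of the oscillation of the medium as a hypothesis $|a(X,\xi)-a(0,\xi)|\le\varepsilon|\xi|^{p-1}$, obtained afterwards by rescaling. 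These are equivalent.
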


\medskip

Before delivering the proofs of Theorem \ref{main 2} and Theorem \ref{main 3}, let us make few comments about our $C^\alpha$ regularity estimates. Initially, as in Theorem \ref{main}, it seems reasonable to establish the same optimal result for measure data $f$, provided
$|f|(B_r) \le C r^{ \frac{\theta n - p}{\theta}}$, for any ball of radius $r$. As for Theorem \ref{main 3}, continuity condition can be greatly relaxed. In fact all we need is a sort of Cordes-Nirenberg type of condition: there exists a universal constant $\delta_\star > 0$ such that
 $$
    |a(X, \xi) - a(0, \xi)| \le  \delta_\star|\xi|^{p-1}.
 $$
 The upper threshold case for continuity theory, $f \in L^n$, is a delicate issue, see \cite{Teix}.  At this point, though, an interesting consequence of  Theorem \ref{main 2} is that solutions to
 $$
    -\nabla a(X, Du) = f \in L^n_\text{weak}(B_1),
 $$
for measurable coefficients equations,  has \textit{almost} the same modulus of continuity as $a$-harmonic functions, i.e., solutions to $-\nabla a(X, Dh) = 0$. That is, if $a$-harmonic functions in $B_1$ are locally $C^{\alpha_0}$, then solutions to $-\nabla a(X, Du) = f \in L^n_\text{weak}(B_1)$ are locally $C^{\beta}$, for any $0 < \beta < \alpha_0$.  The same analysis employed in Theorem \ref{main 3} gives that for equations with continuous coefficients, solutions to $-\nabla a(X, Du) = f \in L^n_\text{weak}(B_1)$ are locally $C^{\beta}$, for any $0 < \beta < 1$.

\subsection{Proof of Theorem \ref{main 2}}\label{Section proof main 2}

We revisit the proof of Lemma \ref{key lemma}. Suppose $\intav{B_1} |u|^p dX \le 1$ and for $q = \theta \cdot p/n$,
$$
    \varepsilon_1 \ge \|f\|_{L_{\text{weak}}^q(B_1)} \ge c_n \|f\|_{L_\text{weak}^{\frac{n}{p}}},
$$
with $\varepsilon_1 > 0$ to be chosen. From Lemma \ref{compactness} there exists a function $h$, solution to
$$
    -\nabla \cdot a(X, D h) = 0, \quad  \text{ in }  B_{1/2}
$$
such that
$$
    \intav{B_{1/2}} |u(X) - h(X)|^p \le \delta_1.
$$
The latter choice for $\delta_1$ determines $\varepsilon_1$ through the compactness Lemma \ref{compactness}. Since $\|h\|_{L^p}$ is under control, the regularity theory for homogeneous equation assures $h \in C^{\alpha_0}(B_{1/3})$ and for a universal constant $C > 0$,
$$
    |h(X) - h(0)| \le C |X|^{\alpha_0}.
$$
We can readily estimate
\begin{equation} \label{thm 2 eq 01}
\begin{array}{lll}
    \displaystyle \intav{B_{\lambda_1}} | u(X) - h(0)|^p dX & \le & \displaystyle 2^{p-1} \left (\intav{B_{\lambda_1}} | u(X) - h(X)|^p dX + \intav{B_{\lambda_1}} | h(X) - h(0)|^p dX \right ) \\
    & \le & 2^{p-1} \delta_1 \lambda_1^{-n} +  2^{p-1} \lambda_1^{p\alpha_0}.
\end{array}
\end{equation}
Now, fixed  $\alpha < \alpha_0$ we can choose $\lambda_1 \ll 1$ universally small so that
\begin{equation} \label{thm 2 eq 02}
    2^{p-1} \lambda_1^{p \alpha_0} \le \dfrac{1}{10} \lambda_1^{\alpha}.
\end{equation}
Once $\lambda_1$ is chosen as indicated above, we select $\delta_1$ (and therefore $\varepsilon_1$) as
\begin{equation} \label{thm 2 eq 03}
    2^{p-1} \delta_1 = \dfrac{9}{10} \lambda_1^{n+\alpha}.
\end{equation}
If we combine \eqref{thm 2 eq 01}, \eqref{thm 2 eq 02} and \eqref{thm 2 eq 03} we conclude that
\begin{equation} \label{thm 2 key lemma}
    \intav{B_{\lambda_1}} | u(X) - h(0)|^p dX \le \lambda_1^{p\alpha},
\end{equation}
provided
\begin{equation} \label{thm 2 key lemma hyp}
    \|f\|_{L_{\text{weak}}^q(B_1)} \le \varepsilon_1,
\end{equation}
for $0 <\varepsilon_1 \ll 1$ that depends only on dimension, $p$ $\lambda$, $\Lambda$ and $\alpha < \alpha_0$. In addition, from the regularity theory for homogeneous equation,
\begin{equation} \label{thm 2 key lemma estimate}
    |h(0)| \le C,
\end{equation}
for a universal constant $C>0$.

We remind that the assumptions  $\intav{B_1} |u|^p dX \le 1  \text{ and } \|f\|_{L_{\text{weak}}^q(B_1)} \le \varepsilon_1$
can be reached by a simple change of scaling and normalization.  Thus, with no loss of generality,  we can work under these hypotheses.

 \par

In the sequel we shall prove that there exists a convergent sequence $\{\mu_k\}_{k\in \mathbb{N}}  \subset \mathbb{R}$ for which
\begin{equation} \label{thm 2 induction}
    \intav{B_{\lambda_1^k}} | u(X) - \mu_k|^p dX \le \lambda_1^{kp\alpha}.
\end{equation}
As before, we will verify \eqref{thm 2 induction} by induction. The case $k =1$ is precisely \eqref{thm 2 key lemma}, with $\mu_1 = h(0)$. Suppose we have checked  \eqref{thm 2 induction} for $k =1, 2, \cdots, m$. Define
\begin{equation} \label{thm 2 def v}
    v(X) := \dfrac{u(\lambda_1^mX) - \mu_m}{\lambda_1^{m \alpha}}.
\end{equation}
With the same notation as in \eqref{proof main Eq02}, we readily verify, as in (\ref{proof main Eq03}), that
\begin{equation} \label{thm 2 eq v}
  \left |  L_{\lambda_1^{\alpha m}} v(X) \right | \le \lambda_1^{m[p - (p-1)\alpha]} \left | f(\lambda_1^{m} X) \right | =: f_m(X).
\end{equation}
One easily estimates, for any $\tau > 0$,
\begin{equation}\label{proof thm 2 sim Eq04}
    \begin{array}{lll}
        \Leb \left (\{ X\in B_1 : |f_m| > \tau \} \right ) &=&   \Leb \left (\{ X\in B_1 : |f| \ge \frac{\tau}{ {\lambda_1^{{m[p - (p-1)\alpha]}}} }   \right ) \cdot {\lambda_1^{-m\cdot n}} \\
        &\le & \dfrac{\|f\|^q_{L_\text{weak}^{q} }}{\tau^{q}} \cdot \left [ \lambda_1^{m[p - (p-1)\alpha]q} \cdot \lambda_1^{-m \cdot n} \right ] \\
        & \le & \varepsilon_1^q,
    \end{array}
\end{equation}
in view of the sharp assumption \eqref{thm 2 alpha restriction}. We have shown that $v$ is entitled to the conclusion in \eqref{thm 2 key lemma}. Let $h_m$ be the solution to the homogeneous problem that is $\sqrt[p]{\delta_1}$-close to $v$ in $B_{1/2}$ in the $L^p$-distance. We label $h_m(0) = t_m$ and, as  in \eqref{thm 2 key lemma estimate}$,
|t_m| < C$ for a universal constant. Applying  \eqref{thm 2 key lemma} to $v$ we find
\begin{equation} \label{thm 2 end eq 01}
        \intav{B_{\lambda_1}} |v(X) - t_m|^p d X \le  \lambda_1^{p \alpha}.
\end{equation}
Rescaling \eqref{thm 2 end eq 01} back yields
\begin{equation} \label{thm 2 end eq 02}
        \intav{B_{\lambda_1^{m+1}}} |u(X) - (\mu_m + \lambda_1^{m \alpha} t_m) |^p d X \le \lambda_1^{p \alpha(m+1)}.
\end{equation}
Therefore, the induction step for \eqref{thm 2 induction} is verified by taking
$$
    \mu_{m+1} := \mu_m + \lambda_1^{m\alpha} t_m.
$$
Indeed $\{\mu_k\}_{k\in \mathbb{N}}$ is a convergent sequence, because we estimate
$$
    |\mu_{k+j} - \mu_{k}| \le C  \dfrac{\lambda_1^{\alpha k}}{1 - \lambda_1^{\alpha}} = \text{o}(1),
$$
as $k \to \infty$. Finally, if we define
$$
    \overline{\mu} := \lim\limits_{k \to \infty} \mu_k,
$$
and $0 < r < 1$ is arbitrary, estimate \eqref{thm 2 induction} gives
$$
    \intav{B_r} | u(X) - \overline{\mu}|^p dX \le C r^{p\alpha};
$$
therefore $u$ is $\alpha$-H\"older continuous at the origin. The proof of Theorem \ref{main 2} follows now via standard covering arguments, which we omit here.  \qed

\subsection{Proof of Theorem \ref{main 3}} \label{Section proof main 3}

For convenience, let us label $q := \theta \cdot \frac{p}{n} > \frac{p}{n}.$
The proof of Theorem \ref{main 3} is based on the following refinement of the Compactness Lemma \ref{compactness}.

\begin{lemma} \label{lemma comp cont media} Let $u \in W^{1,p}(B_1)$ be a weak solution to \eqref{Eq}, with $\intav{B_1} |u|^p dX \le 1$. Given $\delta > 0$, there exists a $0 < \varepsilon \ll 1$, depending on  depending only on $p$, $n$,
$\lambda$, $\Lambda$ and $\delta$ such that if
\begin{equation}\label{Hyp Lemma Compactness Cont Media}
    \|f\|_{L_\text{weak}^q(B_1)}   \le \varepsilon, \quad \text{and} \quad |a(X, \xi) - a(0, \xi)| \le \varepsilon |\xi|^{p-1},
\end{equation}
then there exists a function $h$ in $B_{1/2}$  solution to
\begin{equation}\label{hom eq compact cont media}
    -\nabla \cdot \overline{a} (Dh) = 0, \quad \text{ in } B_{1/2},
\end{equation}
for some constant coefficient vector field $\overline{a}$
satisfying \eqref{Hyp a} with the same ellipticity constants $\lambda$ and $\Lambda$, such that
$$
    \intav{B_{1/2}} |u(X) - h(X)|^p dX  < \delta^p.
$$
\end{lemma}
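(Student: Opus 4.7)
The plan is to proceed by contradiction, adapting the compactness argument of Lemma \ref{compactness} to extract a limit vector field that is independent of the spatial variable. Suppose the conclusion fails. Then there exist $\delta_0 > 0$ and sequences $u_k \in W^{1,p}(B_1)$, $f_k \in L^q_{\text{weak}}(B_1)$, and vector fields $a_k$ satisfying \eqref{Hyp a} with the same $\lambda, \Lambda$, such that $\intav{B_1}|u_k|^p\,dX \le 1$, $-\nabla\cdot a_k(X,\nabla u_k) = f_k$ in $B_1$, together with the smallness
\[
    \|f_k\|_{L^q_{\text{weak}}(B_1)} = \mathrm{o}(1), \qquad \sup_{X\in B_1,\,\xi\neq 0} \frac{|a_k(X,\xi)-a_k(0,\xi)|}{|\xi|^{p-1}} = \mathrm{o}(1),
\]
yet $\intav{B_{1/2}}|u_k - h|^p\,dX \ge \delta_0^p$ for every constant-coefficient $\overline{a}$-harmonic function $h$ on $B_{1/2}$.

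First, the standard Caccioppoli energy estimate (exactly as in the proof of Lemma \ref{compactness}, valid since $q > p/n$ is stronger than the $n/p$ integrability threshold used there) yields $\int_{B_{1/2}}|\nabla u_k|^p\,dX \le C$ uniformly in $k$. Passing to a subsequence, $u_k \rightharpoonup u$ in $W^{1,p}(B_{1/2})$, $u_k \to u$ in $L^p(B_{1/2})$, and by the Boccardo--Murat truncation argument \cite{BM} applied to equation \eqref{Eq} with $L^q_{\text{weak}}$ source, $\nabla u_k \to \nabla u$ almost everywhere in $B_{1/2}$.

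Next comes the new step: identifying the limit of $a_k$. The structural assumption \eqref{Hyp a} yields uniform local Lipschitz continuity of $a_k(X,\cdot)$ in $\xi$ (with $k$-independent modulus), so an Ascoli-type extraction, as in Lemma \ref{compactness}, gives a subsequential limit $\overline{a}(X,\xi)$ that still satisfies \eqref{Hyp a} with the same constants. The hypothesis $|a_k(X,\xi) - a_k(0,\xi)| \le (1/k)|\xi|^{p-1}$ then passes to the limit to give $\overline{a}(X,\xi) = \overline{a}(0,\xi)$ for all $X,\xi$, so that $\overline{a}$ genuinely depends only on $\xi$.

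Finally, testing $-\nabla\cdot a_k(X,\nabla u_k) = f_k$ against any $\phi \in W^{1,p}_0(B_{1/2})$, the $L^p_{\text{weak}}$ smallness of $f_k$ together with the a.e.\ convergence $\nabla u_k \to \nabla u$, the uniform growth bound $|a_k(X,\nabla u_k)| \le \Lambda |\nabla u_k|^{p-1}$ controlled in $L^{p/(p-1)}$, and dominated convergence produce
\[
    \int_{B_{1/2}} \overline{a}(\nabla u)\cdot \nabla \phi \, dX = 0
\]
for all such $\phi$. Thus $h := u$ solves the constant-coefficient homogeneous equation \eqref{hom eq compact cont media} and lies arbitrarily close in $L^p(B_{1/2})$ to $u_k$ for $k$ large, contradicting the assumption. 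The main technical obstacle is ensuring that the extraction of $\overline{a}$ is compatible with passing to the limit in the nonlinear term $a_k(X,\nabla u_k)$; this is precisely where the a.e.\ convergence of gradients (via Boccardo--Murat) combined with the uniform $\xi$-continuity of $a_k$ is essential.
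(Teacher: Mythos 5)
Your proposal is correct and takes essentially the same approach as the paper's proof: contradiction, Caccioppoli/Giusti energy estimates, weak $W^{1,p}$ and strong $L^p$ compactness, Boccardo--Murat almost-everywhere gradient convergence, and an Ascoli-type extraction of a limiting constant-coefficient vector field from the vanishing $X$-oscillation of $a_k$. The only cosmetic difference is that the paper first extracts a limit of the frozen-coefficient maps $a_k(0,\cdot)$ and then shows $a_k(X,\cdot)$ converges to that same limit via the triangle inequality, whereas you extract a joint limit $\overline a(X,\xi)$ and then argue it is $X$-independent; the two are logically equivalent, and both glosses of the final passage to the limit (your ``dominated convergence,'' the paper's ``$+\,\mathrm{o}(1)$'') silently use the standard fact that a.e.\ convergence plus an $L^{p/(p-1)}$ bound on $a_k(X,\nabla u_k)$ yields weak convergence against $\nabla\phi\in L^p$.
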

\begin{proof}
As before, let us assume, searching for a contradiction, that the thesis of the Lemma fails. If so, there would exist a $\delta_0 > 0$ and sequences
 $$
     u_k \in W^{1,p}(B_1), \quad \text{and} \quad  f_k \in L_\text{weak}^q(B_1),
 $$
with
 \begin{equation}\label{prof comp CM eq 00}
     \intav{B_1} |u_k(X)|^p dX \le 1,
 \end{equation}
 for all $k\ge1$,
\begin{equation}\label{prof comp CM eq 01}
            -\nabla \cdot a_k (X, Du_k) = f_k \text{ in } B_1,
\end{equation}
where $a_k$ satisfies \eqref{Hyp a},  and
\begin{equation}\label{prof comp CM eq 02}
            \|f_k\|_{L^q(B_1)} +|\xi|^{1-p}|a_k(X, \xi) - a_k(0, \xi)| = \text{o}(1),
\end{equation}
as $k \to 0$; however
\begin{equation}\label{prof comp CM eq 03}
            \intav{B_{1/2}} |u_k(X) - h(X)|^p dX \ge \delta_0,
\end{equation}
for any solution $h$ to a homogeneous, constant coefficient
equation \eqref{hom eq compact cont media}, in $B_{1/2}$ and all
$k \ge 1$. Reasoning as indicated in the proof of Lemma
\ref{compactness}, we have
$$
    \int_{B_{1/2}} |\nabla u_k|^p dX \le C,
$$
 for all $k \ge 1$. Thus, up to a subsequence, there exists a function $u \in W^{1,p}(B_{1/2})$ for which
\begin{equation}\label{prof comp CM eq 04}
    u_k \rightharpoonup u \text{ in }W^{1,p}(B_{1/2}),  \quad u_k \to u \text{ in } L^p(B_{1/2}), \quad \text{and} \quad  \nabla u_k(X) \to \nabla u(X) \text{   a.e. in } B_{1/2}.
\end{equation}
Also, by Ascoli Theorem, there exists a subsequence under which
$a_{k_j}(0, \cdot) \to \overline{a}(0, \cdot)$ locally uniformly.
Thus, for any $X \in B_{1/2}$,
\begin{equation}\label{prof comp CM eq 05}
    |a_{k_j}(X, \xi) - \overline{a}(0, \xi)| \le |a_{k_j}(X, \xi) - a_{k_j}(0, \xi)| + | a_{k_j}(0, \xi) -
    \overline{a}(0, \xi)| = \text{o}(1),
\end{equation}
that is, $a_{k_j}(X, \cdot) \to \overline{a}(0, \cdot)$ locally
uniformly. Finally, given a test function $\phi \in
W^{1,p}_0(B_{1/2})$, in view of \eqref{prof comp CM eq 02},
\eqref{prof comp CM eq 04} and \eqref{prof comp CM eq 05} we have
$$
    \begin{array}{lll}
        \displaystyle \int_{B_{1/2}} a_k(X, \nabla u_k) \cdot \nabla \phi dX &=&  \displaystyle \int_{B_{1/2}} f_k \varphi dX  \\
        & = &  \displaystyle \int_{B_{1/2}} \overline{a}(0, \nabla u) \cdot \nabla \phi dX + \text{o}(1),
    \end{array}
$$
as $k \to \infty$. Since $\phi$ was arbitrary, we conclude $u$ is a  solution to a constant coefficient equation in $B_{1/2}$. Finally we reach a contradiction in \eqref{prof comp CM eq 03} for $k \gg 1$.
\end{proof}

The main difference between Lemma \ref{compactness} and Lemma \ref{lemma comp cont media} is the fact that the former provides existence of a $C^{\alpha_0}$ function close to $u$ under smallness assumptions on the data. The latter gives a $C^1$ function near $u$ under smallness assumptions that also involve continuity of the medium. Thus, the following version of Lemma \ref{key lemma} can be proven by similar arguments used to establish estimate \eqref{thm 2 key lemma}.

\begin{lemma}\label{key lemma CM}  Let $u \in W^{1,p}(B_1)$ be a weak solution to \eqref{Eq}, with $\intav{B_1} |u|^p dX \le 1$. Given $ \alpha < 1$, there exist constants $0 < \varepsilon_0 \ll 1$, $0 < \lambda_0 \ll 1/2$ and $c_0 \in \mathbb{R}$ such that if
\begin{equation}\label{Hyp key Lemma}
    \|f\|_{L_\text{weak}^q(B_1)}  \le \varepsilon_0 \quad  \text{ and } \quad |a(X, \xi) - a(0, \xi)|  \le
    \varepsilon_0 |\xi|^{p-1},
\end{equation}
then
\begin{equation}\label{Thesis key Lemma 01}
    \intav{B_{\lambda_0}} |u(X) - c_0  |^p dX  \le \lambda_0^{p\alpha}.
\end{equation}
\end{lemma}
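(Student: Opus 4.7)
The plan is to mimic the proof of Lemma \ref{key lemma} (and of the decay estimate \eqref{thm 2 key lemma}), but using the refined compactness statement Lemma \ref{lemma comp cont media} in place of Lemma \ref{compactness}. The decisive improvement is that, under the near-constant-coefficient hypothesis on $a$, the comparison function $h$ solves a \emph{constant-coefficient} homogeneous equation, so by the $C^{1,\epsilon}$ estimate \eqref{C1,alpha theory} we obtain a genuinely Lipschitz comparison at the origin; this is precisely what upgrades the admissible exponent from $\alpha<\alpha_{0}$ to any $\alpha<1$.

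Given the target exponent $\alpha<1$, I would first choose an auxiliary $\delta>0$ (to be fixed later) and apply Lemma \ref{lemma comp cont media} to produce $h\in W^{1,p}(B_{1/2})$ satisfying $-\nabla\cdot\overline{a}(\nabla h)=0$ for some constant-coefficient vector field $\overline{a}$ in the universal class \eqref{Hyp a}, together with
\[
\intav{B_{1/2}}|u-h|^{p}\,dX\;\le\;\delta^{p}.
\]
In particular $\|h\|_{L^{p}(B_{1/2})}$ is universally controlled, so \eqref{C1,alpha theory} yields a universal constant $C=C(n,p,\lambda,\Lambda)$ with $|h(X)-h(0)|\le C|X|$ for $X\in B_{1/4}$ and $|h(0)|\le C$. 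Setting $c_{0}:=h(0)$ and using the convexity inequality $|a+b|^{p}\le 2^{p-1}(|a|^{p}+|b|^{p})$ together with $B_{\lambda_{0}}\subset B_{1/2}$, I would estimate, for any $\lambda_{0}\le 1/4$,
\[
\intav{B_{\lambda_{0}}}|u-c_{0}|^{p}\,dX\;\le\;2^{p-1}C(n)\,\lambda_{0}^{-n}\delta^{p}\;+\;2^{p-1}C^{p}\lambda_{0}^{p}.
\]

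The only subtlety is the order of the two smallness choices. I would first pick $\lambda_{0}\ll 1/2$, depending only on $n,p,\lambda,\Lambda,\alpha$, so that $2^{p-1}C^{p}\lambda_{0}^{p}\le \tfrac{1}{2}\lambda_{0}^{p\alpha}$; this is possible exactly because $p>p\alpha$, which is where the restriction $\alpha<1$ enters. With $\lambda_{0}$ now fixed, I would choose $\delta$ small enough that $2^{p-1}C(n)\lambda_{0}^{-n}\delta^{p}\le \tfrac{1}{2}\lambda_{0}^{p\alpha}$, absorbing the $\lambda_{0}^{-n}$ blow-up of the approximation term. This $\delta$ is universal, and through Lemma \ref{lemma comp cont media} it determines $\varepsilon_{0}$. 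Adding the two halves produces exactly \eqref{Thesis key Lemma 01}. No difficulty arises from the $L^{q}_{\text{weak}}$ hypothesis on $f$: it is used only to feed Lemma \ref{lemma comp cont media}, exactly as in the earlier lemmas.
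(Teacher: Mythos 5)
Your proposal is correct and follows essentially the same argument as the paper: apply the refined compactness Lemma \ref{lemma comp cont media} to obtain a constant-coefficient $a$-harmonic approximant $h$, use the $C^{1,\epsilon}$ estimate \eqref{C1,alpha theory} to get a Lipschitz bound at the origin, set $c_{0}=h(0)$, split via the convexity inequality, and fix first $\lambda_{0}$ (using $\alpha<1$) and then $\delta$ (hence $\varepsilon_{0}$). The order of the smallness choices and the role of $\alpha<1$ are exactly as in the paper's proof.
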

\begin{proof}
For $\delta > 0$ to be regulated \textit{a posteriori}, let $h$ be a solution to a constant coefficient equation assured by Lemma \ref{lemma comp cont media}, that is $\delta$-close to $u$ in the $L^p$-norm. From $C^{1,\epsilon}$ regularity theory for constant coefficient equations, \eqref{C1,alpha theory}, there exists a constant $C$ depending only on $n, p, \lambda$ and $\Lambda$ such that
$$
    |h(X) - h(0)| \le C|X|.
$$
Since $\|h\|_{L^p} \le C$, by $L^\infty$ bounds,
$$
    |h(0)| \le C.
$$ 
We now estimate,
$$
    \begin{array}{lll}
        \displaystyle \intav{B_{\lambda_0}} |u(X) - h(0)|^p dX & \le &   2^{p-1} \left ( \displaystyle \intav{B_{\lambda_0}} |h(X) - h(0)|^p dX  +  \intav{B_{\lambda_0}} |u(X) - h(X)|^p dX \right ) \\
        & \le & 2^{p-1} \delta^p \lambda_0^{-n} + 2^{p-1} C\lambda_0^p
    \end{array}
$$
Since $0 < \alpha < 1$, it is possible to select $\lambda_0$ small enough as to assure
$$
     2^{p-1} C\lambda_0^p \le \dfrac{1}{2} \lambda_0^{\alpha p}.
$$
Once selected $\lambda_0$, we set
$$
    \delta := \dfrac{1}{2} \lambda_0^{\frac{n}{p} + \alpha},
$$
which determines the smallness condition $\varepsilon_0$ through the compactness Lemma \ref{lemma comp cont media}.
\end{proof}

Finally, the proof of Theorem
\ref{main 3} follows by the induction argument from Section
\ref{Section proof main 2}, having Lemma \ref{key lemma CM} as its starting basis. We omit the details here.

\bibliographystyle{amsplain, amsalpha}

\bigskip

\noindent \textsc{Eduardo V. Teixeira} \\
\noindent Universidade Federal do Cear\'a \\
\noindent Departamento de Matem\'atica \\
\noindent Campus do Pici - Bloco 914 \\
\noindent Fortaleza, CE - Brazil 60.455-760 \\
 \noindent \texttt{teixeira@mat.ufc.br}

\end{document}